\documentclass[12pt,a4paper]{article}
\usepackage[utf8]{inputenc}
\usepackage{amsmath}
\usepackage{amscd}
\usepackage{amsthm}
\usepackage{ tipa }
\usepackage{ upgreek }
\usepackage{amssymb}
\usepackage{hyperref}
\usepackage{enumitem}
\usepackage{url}
\usepackage{times}
\newtheorem{theorem}{Theorem}
\newtheorem{theoremA}{Theorem}

\newtheorem{lemma}{Lemma}
\newtheorem{remark}{Remark}
\numberwithin{equation}{section}

\allowdisplaybreaks    

\newcommand\blfootnote[1]{%
  \begingroup
  \renewcommand\thefootnote{}\footnote{#1}%
  \addtocounter{footnote}{-1}%
  \endgroup
}
\title{On the distribution of the sequence of integers $d(n^2)$}
\author{K. VENKATASUBBAREDDY\\Email: \href{mailto:20mmpp02@uohyd.ac.in}{20mmpp02@uohyd.ac.in} \and A. SANKARANARAYANAN\\Email: \href{mailto:sank@uohyd.ac.in}{sank@uohyd.ac.in}}
\date{}

\begin{document}

\maketitle
\begin{abstract}
In this paper, we study the distribution of the sequence of integers $d(n^2)$ under the assumption of the strong Riemann hypothesis. Under this assumption, we provide a refined asymptotic formula for the sum $\displaystyle\sum_{n\leq x}d(n^2)$ with an improved error term by extracting some more main terms.
\end{abstract}
\blfootnote{2010 AMS \emph{Mathematics subject classification.} Primary 11M06, 11M26.}
\blfootnote{\emph{Keywords and phrases.} Dirichlet series, Riemann zeta function, Riemann hypothesis, Perron formula.}

\section{Introduction}
An important and interesting problem in Analytic Number Theory is to study and estimate the error term $E(x)$ in the asymptotic formula of the distribution of the divisor function $d(n)$,
\begin{equation*}
    \text{i. e., }\sum_{n\leq x}d(n)=x\log x+(2\gamma-1)x+E(x)
\end{equation*}
and is known as the Dirichlet divisor formula. Dirichlet was the first to study and estimate by elementary arguments that $E(x)\ll x^{\frac{1}{2}}$ (see \cite{Ivic2012}), and after that many researchers studied and improved this bound by various methods and techniques, and the best known bound is due to Huxley \cite{Huxley}, and is $E(x)\ll x^{\frac{131}{416}+\varepsilon}$.

The omega results for this error term state that the best possible bound that one can establish is:

By Hardy \cite{Ivic2012}, the omega result for the error term $E(x)$ is
 \begin{equation*}
    E(x)=\begin{cases}
        \Omega_+\left((x\log x)^\frac{1}{4}\log \log x\right)\\
        \Omega_-(x^\frac{1}{4}).
         \end{cases}
\end{equation*}
It is still an unproven conjecture that $E(x)=O\left(x^{\frac{1}{4}+\varepsilon}\right)$. The Hardy omega result was consequently improved by Gangadharan, Corr\'adi and K\'atai, Hafner (see \cite{Corradi and Katai}, \cite{Gangadharan}, \cite{Hafner}), and for the best known bound, see \cite{Hafner}.

Note that the generating $L$-function attached to the coefficients $d(n)$ is $\zeta^2(s)$, where $\zeta(s)$ is the Riemann zeta function defined as
\begin{equation*}
    \zeta(s)=\sum_{n=1}^\infty\frac{1}{n^s}
\end{equation*}
 for $\Re(s)>1$, which has meromorphic continuation to the whole of complex plane $\mathbb{C}$ by the functional equation
\begin{equation}
    \zeta(s)=\chi(s)\zeta(1-s)\label{E1.1}
\end{equation}
and the conversion factor $\chi(s)$ satisfies
\begin{equation}
    \mid\chi(s)\mid\sim |t|^{\frac{1}{2}-\sigma}\label{E1.2}
\end{equation} 
for $s=\sigma+it$,  $|t|\geq t_0>10$ (see \cite{Titchmarsh}).

\noindent
\textbf{Riemann hypothesis:} All the nontrivial zeros of the Riemann zeta function $\zeta(s)$ lie on the critical line $\Re(s) = \frac{1}{2}$. 

\noindent
\textbf{Stronger Riemann hypothesis:} All the nontrivial zeros of the Riemann zeta function $\zeta(s)$ lie on the critical line $\Re(s) = \frac{1}{2}$ and each such zero is simple. 

In this paper, we are interested in the $L$-functions $\displaystyle\frac{\zeta(s)}{\zeta(2s)},\ \frac{\zeta^2(s)}{\zeta(2s)},\ \frac{\zeta^3(s)}{\zeta(2s)}$ and $\displaystyle\frac{\zeta^4(s)}{\zeta(2s)}$ and the distribution of the corresponding arithmetical functions. In contrast, the corresponding arithmetical functions are $|\mu(n)|,\ 2^{\omega(n)},\ d(n^2)$ and $d(n)^2$, respectively, where $\mu(n)$ is the M\"obius function and $\omega(n)$ denotes the number of distinct prime divisors of $n$. In particular, we are interested in the distributions $\displaystyle\sum_{n\leq x}|\mu(n)|,\ \sum_{n\leq x}2^{\omega(n)},\ \sum_{n\leq x}d(n^2)$ and $\displaystyle\sum_{n\leq x}d(n)^2$. Investigations on these sums have already been carried out, and their precise results are as follows (see \cite{Ivic2012}):

We have
\begin{align}
    &\sum_{n\leq x}|\mu(n)|=\frac{x}{\zeta(2)}+O\left(x^\frac{1}{2}\exp \left(-C\varepsilon(x)\right)\right),\label{E1.3}\\
    &\sum_{n\leq x}2^{\omega(n)}=A_1x\log x+A_2x+O\left(x^\frac{1}{2}\exp \left(-C\varepsilon(x)\right)\right),\label{E1.4}\\
   & \sum_{n\leq x}d(n^2)=B_1x\log^2x+B_2x\log x+B_3x+O\left(x^\frac{1}{2}\exp \left(-C\varepsilon(x)\right)\right),\label{E1.5}\\
   & \sum_{n\leq x}d(n)^2=C_1x\log^3x+C_2\log^2x+C_3\log x+C_4x+O\left(x^{\frac{1}{2}+\varepsilon}\right),\label{E1.6}
\end{align}
where $\varepsilon(x)=(\log x)^\frac{3}{5}(\log \log x)^{-\frac{1}{5}}$ and $C>0,\ A_1,\hdots, C_4$ are constants, which can be evaluated explicitly. 

In \cite{Ivic2012}, it is also mentioned that under the assumption of the Riemann hypothesis, the error term in \eqref{E1.3} can be improved to $\displaystyle O\left(x^{\frac{2}{5}}\exp\left(\frac{C\log x}{\log \log x}\right)\right)$. In \cite{KVAS}, we have improved the error term appearing in $\eqref{E1.4}$ by extracting some more main terms from the error term, unconditionally and under the assumption of the strong Riemann hypothesis, and our precise results are as follows:

\begin{theoremA}
 Under the assumption of the strong Riemann hypothesis, that is, all the nontrivial zeros of $\zeta(s)$ and $\zeta(2s)$ lie on the respective critical lines, and each such zero is simple, we have 
  \begin{equation*}
      \sum_{n\leq x}2^{\omega(n)}=\mathcal{A}_1^{'}x\log x+\mathcal{A}_2^{'}x+\sum_{\substack{\zeta(2\rho)=0,\ \Re (\rho)=\frac{1}{4},\\
0<\left| \Im (2\rho)=\gamma_\frac{1}{4}\right|<\frac{x}{10}}}\mathcal{A}_{\gamma_\frac{1}{4}}^{'}x^{\frac{1}{4}+i\gamma_\frac{1}{4}}+O\left(x^{\varepsilon}\right),
  \end{equation*}
   where $\mathcal{A}_1^{'}=\frac{1}{\zeta(2)}$ and $\mathcal{A}_2^{'}=\frac{2\gamma-1}{\zeta(2)}$ and $\mathcal{A}_{\gamma_\frac{1}{4}}^{'}$ are some effective complex constants. 
\end{theoremA}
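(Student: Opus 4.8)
The plan is to work with the Dirichlet series attached to $2^{\omega(n)}$, namely
\begin{equation*}
\sum_{n=1}^{\infty}\frac{2^{\omega(n)}}{n^{s}}=\frac{\zeta^{2}(s)}{\zeta(2s)},\qquad \Re(s)>1,
\end{equation*}
which follows from the Euler product, and to recover the summatory function by the truncated Perron formula. First I would fix $c=1+\frac{1}{\log x}$ and a height $T$ (eventually of size about $x/10$) and write
\begin{equation*}
\sum_{n\leq x}2^{\omega(n)}=\frac{1}{2\pi i}\int_{c-iT}^{c+iT}\frac{\zeta^{2}(s)}{\zeta(2s)}\frac{x^{s}}{s}\,\ud s+R(x,T),
\end{equation*}
where the pointwise bound $2^{\omega(n)}\ll n^{\varepsilon}$ makes the truncation error $R(x,T)$ amenable to the standard estimates.

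Next I would shift the line of integration to $\Re(s)=\varepsilon$ through the rectangular contour with horizontal sides at $\Im(s)=\pm T$, collecting the residues of the poles swept across. The factor $\zeta^{2}(s)$ produces a double pole at $s=1$, where $\zeta(2s)$ is analytic and nonvanishing since $\zeta(2)\neq 0$; evaluating $\frac{\ud}{\ud s}\big[(s-1)^{2}\tfrac{\zeta^{2}(s)}{\zeta(2s)}\tfrac{x^{s}}{s}\big]$ at $s=1$ yields the two leading terms $\mathcal{A}_{1}^{'}x\log x+\mathcal{A}_{2}^{'}x$ with $\mathcal{A}_{1}^{'}=\frac{1}{\zeta(2)}$. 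The remaining poles come from the zeros of $\zeta(2s)$: under the Riemann hypothesis these are the points $s=\rho$ with $\zeta(2\rho)=0$ and $\Re(\rho)=\tfrac14$, and under the \emph{strong} Riemann hypothesis they are simple, so each contributes a simple-pole residue
\begin{equation*}
\operatorname*{Res}_{s=\rho}\frac{\zeta^{2}(s)}{\zeta(2s)}\frac{x^{s}}{s}=\frac{\zeta^{2}(\rho)}{2\rho\,\zeta'(2\rho)}\,x^{\rho},
\end{equation*}
which is exactly the term $\mathcal{A}_{\gamma_{1/4}}^{'}x^{1/4+i\gamma_{1/4}}$ with $\mathcal{A}_{\gamma_{1/4}}^{'}=\frac{\zeta^{2}(\rho)}{2\rho\,\zeta'(2\rho)}$; since only the zeros with $|\Im(2\rho)|<T$ lie inside the contour, this is the finite sum in the statement.

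The heart of the argument — and the step I expect to be the main obstacle — is to show that the shifted vertical integral on $\Re(s)=\varepsilon$ together with the two horizontal segments contributes only $O(x^{\varepsilon})$. Applying the functional equation \eqref{E1.1} to both $\zeta(s)$ and $\zeta(2s)$ gives
\begin{equation*}
\frac{\zeta^{2}(s)}{\zeta(2s)}=\frac{\chi^{2}(s)}{\chi(2s)}\cdot\frac{\zeta^{2}(1-s)}{\zeta(1-2s)},
\end{equation*}
and by \eqref{E1.2} the conversion factors satisfy $\big|\chi^{2}(s)/\chi(2s)\big|\asymp|t|^{1/2}$ on $\Re(s)=\varepsilon$, while the reflected quotient is bounded by a power of $\log|t|$ because $1-s$ and $1-2s$ lie close to the line $\Re=1$. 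The difficulty is precisely that this $|t|^{1/2}$ growth from $\chi$ is genuine, so a crude absolute-value bound on the vertical integral only yields $O(x^{\varepsilon}T^{1/2})$; reaching the stated $O(x^{\varepsilon})$ forces one to exploit the oscillation of $x^{s}=x^{\varepsilon}e^{it\log x}$ against the integrand and to use conditional mean-value estimates for $\zeta$ together with upper bounds for $1/\zeta(2s)$ that remain uniform as $\Re(2s)\to\tfrac12^{-}$, where the zeros cluster. Managing this cancellation, while keeping the horizontal segments under control by placing $\pm T$ in gaps between consecutive ordinates of zeros (of length $\gg 1/\log T$) and invoking the strong Riemann hypothesis to bound $1/\zeta$ just to the right of the critical line, is the crux of the proof. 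Once this is achieved, optimising $T\asymp x/10$ absorbs $R(x,T)$ and the contour contributions into $O(x^{\varepsilon})$, and unwinding the Laurent expansion at $s=1$ identifies $\mathcal{A}_{1}^{'}=\frac{1}{\zeta(2)}$ and the companion constant $\mathcal{A}_{2}^{'}$ in terms of $\gamma$, $\zeta(2)$ and $\zeta'(2)$, completing the evaluation.
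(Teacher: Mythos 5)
Your setup (Dirichlet series $\zeta^2(s)/\zeta(2s)$, truncated Perron formula, residues at the double pole $s=1$ and at the simple poles $\rho=\tfrac14+i\gamma_{1/4}$ coming from the simple zeros of $\zeta(2s)$, with $T\asymp x/10$) matches the paper's method, and your residue computations are correct. But there is a genuine gap at exactly the step you flag as ``the heart of the argument'': you stop the contour at $\Re(s)=\varepsilon$, correctly observe that the crude bound there is $O(x^{\varepsilon}T^{1/2})=O(x^{1/2+\varepsilon})$, and then assert that one must exploit oscillation of $x^{it}$, conditional mean-value estimates, and a careful choice of $T$ in gaps between zero ordinates to recover $O(x^{\varepsilon})$. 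None of that is carried out, and none of it is needed: it is the wrong fix. The resolution, which is how the paper proceeds (see the proof of Theorem \ref{T1}, where the identical maneuver is performed for $\zeta^3(s)/\zeta(2s)$), is simply to shift the contour further left, to $\Re(s)=-1+\varepsilon$, picking up the additional bounded residue $\zeta^2(0)/\zeta(0)$ at $s=0$. The key point you are missing is that for $\sigma<0$ the functional equation applies to \emph{both} numerator and denominator, and the conversion-factor growth
\begin{equation*}
\left|\frac{\chi^2(s)}{\chi(2s)}\right|\sim |t|^{2(\frac12-\sigma)-(\frac12-2\sigma)}=|t|^{\frac12}
\end{equation*}
is \emph{independent of} $\sigma$; pushing left costs nothing in the $t$-aspect, while the factor $x^{\sigma}$ decays. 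On $\Re(s)=-1+\varepsilon$ the vertical integral is then $\ll x^{-1+\varepsilon}\int_{10}^{T}t^{\frac12+\varepsilon}\,t^{-1}\,\ud t\ll x^{-1+\varepsilon}T^{\frac12+\varepsilon}\ll x^{-\frac12+2\varepsilon}$ for $T\asymp x/10$, by pure absolute-value estimates.

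The horizontal segments are likewise handled by absolute values, splitting $[-1+\varepsilon,1+\varepsilon]$ at $\sigma=\tfrac14$ and $\sigma=\tfrac12$ (according to whether the functional equation is applied to $\zeta(2s)$, to $\zeta(s)$, or to neither) and using Lemma \ref{L3} under RH: every piece is $\ll x^{1+\varepsilon}T^{-1+\varepsilon}$ or smaller, which is $O(x^{\varepsilon})$ for $T\asymp x/10$, matching the Perron truncation error $O(x^{1+\varepsilon}/T)$. So no cancellation, no mean-value theorems, and no special placement of $T$ relative to zero ordinates enters the argument; as written, your proposal identifies the obstacle but does not overcome it, whereas moving the line of integration past $s=0$ dissolves the obstacle entirely.
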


\begin{theoremA}
Unconditionally, we have
   \begin{equation*}
      \sum_{n\leq x}2^{\omega(n)}=\mathcal{A}_1x\log x+\mathcal{A}_2x+\sum_{\substack{\rho=\beta+i\gamma\\
      0\leq\beta\leq \frac{1}{2}\\0<|\gamma|\leq x^{\frac{21}{29}}}}\mathop{\mathrm{Res}}_{s=\rho} \frac{\zeta^2(s)}{\zeta(2s)}\frac{x^s}{s}+O\left(x^{\frac{8}{29}+\varepsilon}\right),
  \end{equation*}
  where $\mathcal{A}_1^{'}=\frac{1}{\zeta(2)}$ and $\mathcal{A}_2^{'}=\frac{2\gamma-1}{\zeta(2)}$.
\end{theoremA}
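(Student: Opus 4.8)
The plan is to attack the partial sum through its Dirichlet series $\sum_{n\ge 1} 2^{\omega(n)} n^{-s} = \zeta^2(s)/\zeta(2s)$ (valid for $\Re(s) > 1$, as one checks on Euler factors) by the truncated Perron formula followed by a contour shift that collects the relevant residues. First I would fix $c = 1 + 1/\log x$ and write
\[
\sum_{n\le x} 2^{\omega(n)} = \frac{1}{2\pi i}\int_{c-iT}^{c+iT}\frac{\zeta^2(s)}{\zeta(2s)}\frac{x^s}{s}\,ds + O\!\left(\frac{x^{1+\varepsilon}}{T}\right),
\]
using $2^{\omega(n)} \le d(n) \ll n^\varepsilon$ to control the Perron tail. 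The cutoff $x^{21/29}$ in the statement is precisely the choice $T = x^{21/29}$ that makes this truncation error $x^{1+\varepsilon}/T = x^{8/29+\varepsilon}$, so the whole task reduces to shifting the line of integration to the left and showing that everything other than the residues is also $\ll x^{8/29+\varepsilon}$.

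Next I would move the contour to the line $\Re(s) = -\delta$ for a small fixed $\delta > 0$ (with $0 < \delta < 1$, so as to stay to the right of the trivial zero of $\zeta(2s)$ at $s = -1$), picking up all poles in between. The integrand has a double pole at $s = 1$ coming from $\zeta^2(s)$, whose residue supplies the two main terms $\mathcal{A}_1 x\log x + \mathcal{A}_2 x$ recorded in the statement (read off from the Laurent expansions of $\zeta^2(s)$, $1/\zeta(2s)$ and $x^s/s$ at $s=1$, giving leading coefficient $\mathcal{A}_1 = 1/\zeta(2)$), together with a pole at every zero $\rho = \beta + i\gamma$ of $\zeta(2s)$. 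Since all nontrivial zeros satisfy $0 < \beta < \tfrac12$, moving to $\Re(s) = -\delta$ captures exactly those with $|\gamma| \le T$, producing the explicit sum $\sum_{\rho}\operatorname{Res}_{s=\rho}$; the residue notation absorbs any multiplicities, so no simplicity hypothesis enters here.

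It then remains to estimate the shifted vertical integral on $\Re(s) = -\delta$ and the two horizontal segments at $\Im(s) = \pm T$. On $\Re(s) = -\delta$ the point $1-s$ lies to the right of $\Re = 1$, so the functional equation \eqref{E1.1} together with \eqref{E1.2} yields the exact orders $|\zeta(-\delta + it)| \asymp |t|^{1/2+\delta}$ and $|\zeta(-2\delta + 2it)| \asymp |t|^{1/2 + 2\delta}$; hence the integrand is $\asymp |t|^{-1/2}x^{-\delta}$ and the vertical integral is $\ll x^{-\delta}T^{1/2+\varepsilon}$. Taking $\delta = 5/58$ makes this $x^{-5/58}x^{21/58} = x^{8/29}$, matching the target. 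For the horizontal pieces I would use the convexity bound $\zeta(\sigma + it) \ll |t|^{\mu(\sigma)+\varepsilon}$ (with $\mu(\sigma) = (1-\sigma)/2$ on $[0,1]$ and $\mu(\sigma) = \tfrac12 - \sigma$ for $\sigma < 0$); the resulting exponent $(2\mu(\sigma)-1)\log T + \sigma\log x$ is piecewise linear in $\sigma$ and maximized at $\sigma = 1$, giving once more $\ll x/T = x^{8/29+\varepsilon}$. No subconvexity is needed.

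The hard part will be controlling $1/\zeta(2s)$ on the two horizontal segments, since these cross the critical strip where, unconditionally, $1/\zeta$ cannot be bounded by a fixed power of a logarithm. The standard remedy is not to fix $T$ but to select it in a dyadic window $[T_0, 2T_0]$ so that $2T$ lies at distance $\gg 1/\log T$ from every ordinate of a zero of $\zeta$; the usual estimates for $\zeta'/\zeta$ from the Hadamard product, combined with the zero-counting bound $N(T+1) - N(T) \ll \log T$, then give $1/\zeta(2\sigma + 2iT) \ll T^{\varepsilon}$ uniformly for $-\delta \le \sigma \le c$. Making this uniform-in-$\sigma$ bound precise, and verifying that the admissible perturbation of $T$ does not disturb the balance between the cutoff $x^{21/29}$ and the error $x^{8/29}$, is the only genuinely delicate ingredient; the remainder is bookkeeping with residues and the functional equation.
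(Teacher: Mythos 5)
Your overall architecture --- truncated Perron at $T=x^{21/29}$, contour shift to $\Re(s)=-\delta$, residues at $s=1$ and at the zeros of $\zeta(2s)$, functional-equation estimates on the shifted line --- is the same one this paper uses to prove its conditional Theorem \ref{T1}; note that the paper does not actually prove Theorem B here at all, but quotes it from \cite{KVAS}, so the only internal comparison available is with the proof of Theorem \ref{T1}. Your vertical-line analysis is sound: on $\Re(s)=-\delta$ the integrand is indeed $\asymp_\delta |t|^{-1/2}x^{-\delta}$ by \eqref{E1.1} and \eqref{E1.2}, so the choices $\delta=5/58$, $T=x^{21/29}$ balance the vertical integral against the Perron error at $x^{8/29+\varepsilon}$. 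Two minor slips: you omit the simple pole of $\frac{\zeta^2(s)}{\zeta(2s)}\frac{x^s}{s}$ at $s=0$ (harmless, since its residue $\zeta^2(0)/\zeta(0)=\zeta(0)$ is $O(1)$), and once you perturb $T$ inside a dyadic window you must still account for the residues at zeros with ordinate between your $T$ and $x^{21/29}$ in order to recover the exact cutoff in the statement.

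The genuine gap is the step you yourself flag as delicate and then assert without proof: that unconditionally one can choose $T$ so that $1/\zeta(2\sigma+2iT)\ll T^{\varepsilon}$ uniformly on the horizontal segments. No such theorem exists, and the two standard substitutes are far weaker. The zero-spacing argument you sketch (choose $T$ with $\min_{\gamma}|2T-\gamma|\gg 1/\log T$, bound $\zeta'/\zeta \ll \log^2 T$, and integrate) controls $\log|\zeta|$ only up to $O(\log T\log\log T)$, i.e.\ it gives $1/\zeta \ll T^{C\log\log T}$; the $T^{\varepsilon}$ conclusion holds for $\zeta'/\zeta$ (which is why the explicit formula for $\psi(x)$ is unconditional) but not for $1/\zeta$. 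The stronger Valiron-type theorem, obtained by averaging $\sum_{|\gamma-t|\le 1}\log|t-\gamma|^{-1}$ over a unit window of $t$, gives $1/\zeta(\sigma+it)\ll t^{A}$ uniformly in $-1\le\sigma\le 2$ for some fixed absolute constant $A$, but that argument cannot make $A$ small (the averaged sum is genuinely of order $\log T$, since there are $\asymp\log T$ zeros within distance one), and no bound of the shape $t^{\varepsilon}$ is known. Feeding $T^{A}$ into your horizontal estimate, the problematic range $0<\sigma<\frac12$ --- where $2\sigma+2iT$ crosses the critical strip of $\zeta$ --- yields the bound $T^{A-\frac12}x^{\frac12+\varepsilon}$, which with $T=x^{21/29}$ is $\ll x^{8/29+\varepsilon}$ only if $A\le\frac{4}{21}$, far beyond anything known. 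A pointwise bound $1/\zeta\ll t^{\varepsilon}$ to the right of the critical line is exactly what Lemma \ref{L3} extracts from the Riemann hypothesis, and that is exactly the point where the paper's proof of Theorem \ref{T1} invokes RH; so your ``unconditional'' proof has smuggled an RH-strength input into its crucial step. Averaging the entire horizontal integral over the window instead of using a pointwise bound does not obviously repair this either: unconditionally the zeros of $\zeta(2s)$ may be multiple, in which case $1/|\zeta(2s)|$ is not even locally integrable in two dimensions, so the horizontal crossings require a genuinely different treatment from the one you propose.
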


Now, this paper aims to study and establish a refined asymptotic formula for the sum $$\sum_{n\leq x}d(n^2)$$ by extracting some more main terms from the error term in \eqref{E1.5} under the assumption of the strong Riemann hypothesis.

Throughout the paper, $\varepsilon$ is a small positive constant, need not be the same at each occurrence. 

Precisely, we prove: 

\begin{theorem}
  Let $x>0$ be large and $\varepsilon>0$. Under the assumption of the strong Riemann hypothesis for $\zeta(s)$ and $\zeta(2s)$, that is, all the nontrivial zeros of $\zeta(s)$ and $\zeta(2s)$ lie on the respective critical lines, and each such zero is simple, we have 
 \begin{align*}
    \sum_{n\leq x}d(n^2)=&\mathcal{A}_1x(\log x)^2+\mathcal{A}_2 x\log x+\mathcal{A}_3x\\
    &+\sum_{\substack{\rho=\frac{1}{4}+i\gamma_\frac{1}{4}\\\zeta(2\rho)=0,\ |\gamma_\frac{1}{4}|<x^{\frac{2}{3}}}}\mathcal{A}_{\gamma_\frac{1}{4}}x^{\frac{1}{4}+i\gamma_\frac{1}{4}}+O\left(x^{\frac{1}{3}+\varepsilon}\right).
\end{align*}
   where $\mathcal{A}_1$, $\mathcal{A}_2$ and $\mathcal{A}_3$ are some real constants and $\mathcal{A}_{\gamma_\frac{1}{4}}$ are some complex constants which can be evaluated explicitly.
   \label{T1}
\end{theorem}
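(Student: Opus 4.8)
The plan is to attack the sum through Perron's formula applied to the Dirichlet series of $d(n^2)$. First I would record the Euler-product identity $\sum_{n=1}^{\infty} d(n^2)n^{-s} = \zeta^3(s)/\zeta(2s) =: F(s)$, valid for $\Re(s)>1$, which follows by comparing the local factors $\sum_{k\ge 0}(2k+1)p^{-ks} = (1+p^{-s})(1-p^{-s})^{-2}$ and $(1-p^{-2s})(1-p^{-s})^{-3}$. I would then invoke the truncated Perron formula with $c = 1+1/\log x$ and height $T = x^{2/3}$, writing $\sum_{n\le x} d(n^2) = \frac{1}{2\pi i}\int_{c-iT}^{c+iT} F(s)\frac{x^s}{s}\,ds + R$; since $d(n^2)\ll n^{\varepsilon}$, the truncation error satisfies $R \ll x^{1+\varepsilon}/T = x^{1/3+\varepsilon}$. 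Note this choice already fixes $T$ and hence the range $|\gamma_{1/4}|<x^{2/3}$ of the eventual sum over zeros.

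Next I would move the line of integration from $\Re(s)=c$ leftward to $\Re(s)=\sigma_0=-1+\delta$, a fixed abscissa just to the right of the first trivial zero $s=-1$ of $\zeta(2s)$ (where $2s=-2$), applying the residue theorem to the rectangle with corners $c\pm iT$ and $\sigma_0\pm iT$. The enclosed poles of $F(s)x^s/s$ are: the order-three pole at $s=1$ from $\zeta^3(s)$, whose residue is a quadratic in $\log x$ and produces the main terms $\mathcal{A}_1 x(\log x)^2+\mathcal{A}_2 x\log x+\mathcal{A}_3 x$, computed from the Laurent expansion of $\zeta(s)$ at $s=1$ against the analytic factor $x^s/(s\zeta(2s))$; the simple poles at $s=1/4+i\gamma_{1/4}$ arising from the zeros $2s=1/2+2i\gamma_{1/4}$ of $\zeta(2s)$ with $|\gamma_{1/4}|<T$, contributing $\sum \mathcal{A}_{\gamma_{1/4}}x^{1/4+i\gamma_{1/4}}$ with $\mathcal{A}_{\gamma_{1/4}} = \zeta^3(1/4+i\gamma_{1/4})\big/\big(2\zeta'(1/2+2i\gamma_{1/4})(1/4+i\gamma_{1/4})\big)$; and the simple pole of $x^s/s$ at $s=0$, whose residue $\zeta^3(0)/\zeta(0)=1/4$ is $O(1)$ and is absorbed into the error. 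The simplicity of the zeros (strong RH) is precisely what guarantees that the poles at $1/4+i\gamma_{1/4}$ are simple and that the residues take this closed form.

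It then remains to bound the three connecting integrals by $O(x^{1/3+\varepsilon})$. On the left edge $\Re(s)=\sigma_0$ and on the two horizontal edges I would estimate $F$ via the functional equation $\zeta(s)=\chi(s)\zeta(1-s)$ with $|\chi(s)|\asymp|t|^{1/2-\sigma}$. For $\sigma\le 1/2$ this and the Lindelöf bound (a consequence of RH) give $|\zeta^3(s)|\ll|t|^{3(1/2-\sigma)+\varepsilon}$, while the functional equation for $\zeta(2s)$, together with the RH bound $1/\zeta(w)\ll|\Im w|^{\varepsilon}$ for $\Re(w)>1/2$ applied to $w=1-2s$, yields $1/\zeta(2s)\ll|t|^{2\sigma-1/2+\varepsilon}$ when $\Re(2s)<1/2$; multiplying gives $|F(\sigma+it)|\ll|t|^{1-\sigma+\varepsilon}$ for $\sigma<1/4$. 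Hence the vertical integral is $\ll x^{\sigma_0}\int_1^T t^{-\sigma_0+\varepsilon}\,dt \ll x^{\sigma_0}T^{1-\sigma_0+\varepsilon}=x^{1/3+\varepsilon}$ at $\sigma_0=-1+\delta$, $T=x^{2/3}$, and a range-splitting estimate of the horizontal edges gives $\ll x^{1+\varepsilon}/T + x^{1/12+\varepsilon}=x^{1/3+\varepsilon}$. Assembling the residues with these bounds and the Perron error yields the theorem.

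I expect the main obstacle to be the uniform control of $1/\zeta(2s)$ across the critical line $\Re(s)=1/4$, where the zeros and hence the poles of $F$ cluster: the height $T$ must be chosen in a gap between consecutive ordinates so that $1/\zeta(1/2+2iT)\ll|t|^{\varepsilon}$ on the horizontal cuts, and the convexity/Lindelöf estimate for the cube $\zeta^3(s)$ must be threaded together with the RH bound for $1/\zeta(2s)$ uniformly in $\sigma$. It is exactly the extra factor $\zeta(s)$ (a cube here, versus the square in the $2^{\omega(n)}$ case) that forces the contour to be pushed left beyond $s=0$ to $\Re(s)=-1+\delta$ in order to break the $x^{1/2}$ barrier; balancing the resulting vertical integral against the Perron truncation is what pins down $T=x^{2/3}$, the error $x^{1/3+\varepsilon}$, and the truncation $|\gamma_{1/4}|<x^{2/3}$ of the sum over zeros.
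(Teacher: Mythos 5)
Your proposal follows essentially the same route as the paper's proof: the identity $\sum_{n\ge 1} d(n^2)n^{-s}=\zeta^3(s)/\zeta(2s)$, truncated Perron at $c\approx 1$, a contour shift to $\Re(s)=-1+\delta$ picking up the triple pole at $s=1$ (main terms), the simple poles at $s=\tfrac14+i\gamma_{\frac14}$ and at $s=0$, functional-equation plus RH/Lindel\"of bounds on the left and horizontal edges, and the balance $T=x^{2/3}$ yielding the error $O\left(x^{\frac13+\varepsilon}\right)$. Your explicit formula for $\mathcal{A}_{\gamma_{\frac14}}$ and your observation that $T$ should be chosen in a gap between zero ordinates so that $1/\zeta\left(\tfrac12+2iT\right)$ stays controlled on the horizontal segments are, if anything, slightly more careful than the paper's own treatment of that point.
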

\begin{remark}
    It is not difficult to improve the above error term to $O\left( x^{\frac{1}{\theta}+\varepsilon}\right)$ for any large positive integer $\theta>3$ with $\frac{1}{\theta}<\frac{1}{3}$, by moving the line of integration to $\Re(s)=-\theta+2+\varepsilon$ in our proof. However, it should be noted that there exist some more main terms that come from the residues of $\displaystyle\frac{\zeta^3(s)}{\zeta(2s)}\frac{x^s}{s}$ at the poles $s=-1,\ -3,\ -5,\ \cdots, -\left(2\left[\frac{\theta}{2}\right]-3\right)$, and also the length of the summand on the right-hand side of the above asymptotic formula would be $x^{\frac{\theta-1}{\theta}}$, which is comparatively $x^{\frac{\theta-1}{\theta}}\sim x$ for large $\theta$.
\end{remark}
\begin{remark}
    From a general $\Omega$-result of Balasubramanian and Ramachandra \cite{RBKR}, we observe that if 
    \begin{equation*}
        \sum_{n\leq x}d(n^2)=M(x)+E_1(x)
    \end{equation*}
    then $E_1(x)=\Omega\left(x^{\frac{1}{3}-\varepsilon}\right)$. Thus, we are tempted to propose a conjecture here that the sum 
    \begin{equation*}
        \sum_{\substack{\rho=\frac{1}{4}+i\gamma_\frac{1}{4}\\\zeta(2\rho)=0,\ |\gamma_\frac{1}{4}|<x^{\frac{2}{3}}}}\mathcal{A}_{\gamma_\frac{1}{4}}x^{\frac{1}{4}+i\gamma_\frac{1}{4}}\ll x^{\frac{1}{3}+\varepsilon}
    \end{equation*}
    because of the oscillations of the factors $x^{i\gamma_\frac{1}{4}}$, though we are not in a position to substantiate this.
\end{remark}
\begin{remark}
   Although we have succeeded in obtaining refined error terms in equations \eqref{E1.3}, \eqref{E1.4}, and \eqref{E1.5} under the assumption of the strong Riemann hypothesis, it should be noted that it is very difficult to improve the error term in \eqref{E1.6} even under the assumption of the strong Riemann hypothesis following the arguments of this paper. This is due to the numerator factor $\zeta^4(s)$. When we apply the functional equation \eqref{E1.1} of the Riemann zeta function and the grown estimate \eqref{E1.2} for the conversion factor, the growth of $\displaystyle\frac{\zeta^4(s)}{\zeta(2s)}$ becomes $\displaystyle \ll |t|^{4(\frac{1}{2}-\sigma)-(\frac{1}{2}-2\sigma)}=|t|^{\frac{3}{2}-2\sigma}$ in certain half-planes (i.e. $\sigma<0)$, which is uncontrollably large and consequently makes the situation more difficult, and there is a strong resistance. In this case, we note that the best unconditional result to date is (see \cite{Jia and AS} and also \cite{KRAS})
   \begin{equation*}
       \sum_{n\leq x}d(n)^2=C_1x\log^3x+C_2\log^2x+C_3\log x+C_4x+O\left(x^{\frac{1}{2}}(\log x)^5\right).
   \end{equation*}
  It should be noted that the error term here is $\Omega(x^{\frac{3}{8}-\varepsilon})$ (see \cite{RBKR}), and still there is a long gap. 
\end{remark}

\section{Lemmas}
\begin{lemma}
    For $\Re(s)>1$, we have
\begin{equation*}
    F(s)=\sum_{n=1}^\infty\frac{d(n^2)}{n^s}=\frac{\zeta^3(s)}{\zeta(2s)}.
\end{equation*}\label{L1}
\end{lemma}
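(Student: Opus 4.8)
The plan is to prove this standard Dirichlet-series identity by comparing Euler products, exploiting that $n \mapsto d(n^2)$ is a multiplicative function of $n$. First I would record multiplicativity: the ordinary divisor function $d$ is multiplicative, and since $\gcd(m,n)=1$ implies $\gcd(m^2,n^2)=1$, the arithmetical function $n \mapsto d(n^2)$ is again multiplicative. Because $d(n^2) \ll_\varepsilon n^\varepsilon$, the series $F(s) = \sum_{n \geq 1} d(n^2) n^{-s}$ converges absolutely for $\Re(s) > 1$, and hence admits the Euler product factorization
\[
F(s) = \prod_p \sum_{k=0}^\infty \frac{d(p^{2k})}{p^{ks}}.
\]

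The heart of the proof is a single local computation. Since $d(p^{2k}) = 2k+1$, writing $z = p^{-s}$ (so that $|z|<1$ throughout our region) the $p$-th factor becomes $\sum_{k=0}^\infty (2k+1) z^k$. Summing this elementary power series by means of $\sum_{k \geq 0} z^k = (1-z)^{-1}$ and $\sum_{k \geq 0} k z^k = z(1-z)^{-2}$ gives
\[
\sum_{k=0}^\infty (2k+1) z^k = \frac{2z}{(1-z)^2} + \frac{1}{1-z} = \frac{1+z}{(1-z)^2}.
\]
On the other side, the $p$-th Euler factors of $\zeta^3(s)$ and of $\zeta(2s)$ are $(1-z)^{-3}$ and $(1-z^2)^{-1}$ respectively, so the $p$-th factor of $\zeta^3(s)/\zeta(2s)$ equals $(1-z)^{-3}(1-z^2) = (1+z)(1-z)^{-2}$, which is exactly the factor just computed.

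Taking the product over all primes then yields $F(s) = \zeta^3(s)/\zeta(2s)$ for $\Re(s) > 1$, the Euler products on the right also converging absolutely in this half-plane. The only steps requiring any care are the justification of the Euler product rearrangement (immediate from absolute convergence) and the elementary power-series summation; I do not anticipate a genuine obstacle here, since the identity is purely formal once multiplicativity and the value $d(p^{2k}) = 2k+1$ are in hand. If one preferred to avoid the Euler product entirely, an alternative would be to recognize the right-hand side at the level of coefficients via $\zeta^3(s) = \sum_n d_3(n) n^{-s}$ and $\zeta(2s)^{-1} = \sum_n \mu(n) n^{-2s}$ and verify the resulting convolution identity for $d(n^2)$ on prime powers, but the local-factor comparison above is the cleanest route.
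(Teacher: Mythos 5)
Your proposal is correct and takes essentially the same route as the paper: multiplicativity of $d(n^2)$ plus absolute convergence for $\Re(s)>1$ yields the Euler product, and the local identity $\sum_{k\geq 0}(2k+1)p^{-ks}=(1-p^{-s})^{-3}(1-p^{-2s})$ finishes the proof. You actually supply a bit more detail than the paper, which simply asserts the local factorization rather than summing the power series $\sum_{k\geq 0}(2k+1)z^k=(1+z)(1-z)^{-2}$ explicitly.
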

\begin{proof}
   We observe that $d(n^2)$ is multiplicative and $d(n^2)\ll_\varepsilon n^\varepsilon$, for any $\varepsilon>0$, and $|p^{-s}|<1$ for each prime $p$ and $\Re(s)>1$. Hence, the Euler product representation of $F(s)$ is given by
  \begin{align*}
    F(s)=&\prod_p(1+d(p^2)p^{-s}+d(p^4)p^{-2s}+d(p^6)p^{-3s}+d(p^8)p^{-4s}+\cdots)\\
    =&\prod_p(1+3p^{-s}+5p^{-2s}+7p^{-3s}+9p^{-4s}+\cdots)\\
    =&\prod_p\left(1-p^{-s}\right)^{-3}\left(1-p^{-2s}\right)\\
    =&\frac{\zeta^3(s)}{\zeta(2s)},
\end{align*}
which follows from the Euler product representation of the Riemann zeta function.
\end{proof}

\begin{lemma}
    Assuming the strong Riemann hypothesis, that is, that the nontrivial zeros of $\zeta(s)$ and $\zeta(2s)$ lie on the respective critical lines, we have the following residues of $F(s)\frac{x^s}{s}$ at the respective poles.
    \begin{align*}
   \mathop{\mathrm{Res}}_{s=1} F(s)\frac{x^s}{s}=&\mathcal{A}_1x(\log x)^2+\mathcal{A}_2 x\log x+\mathcal{A}_3x,   \\
   \mathop{\mathrm{Res}}_{s=0}\ F(s)\frac{x^s}{s}=&\zeta^2(0)\\
    \mathop{\mathrm{Res}}_{s=\frac{1}{4}+i\gamma_\frac{1}{4}}\ F(s)\frac{x^s}{s}=&\mathcal{A}_{\gamma_\frac{1}{4}}x^{\frac{1}{4}+i\gamma_\frac{1}{4}}    
   \end{align*}
   where $\frac{1}{4}+i\gamma_\frac{1}{4}$ are the coordinates of the simple zeros of $\zeta(2s)$ on the line $\Re(s)=\frac{1}{4}$ and the $\mathcal{A}_i$'s are real effective, and the $\mathcal{A}_{\gamma_i}$'s are complex effective constants.  \label{L2}
\end{lemma}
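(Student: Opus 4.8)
The plan is to compute each residue directly from the Laurent expansions of the relevant factors, treating the three poles separately. The key structural fact is that $F(s)=\zeta^3(s)/\zeta(2s)$ has a triple pole at $s=1$ coming from $\zeta^3(s)$, a simple pole at $s=0$ coming from the factor $1/s$ in $\frac{x^s}{s}$, and simple poles at each $s=\tfrac14+i\gamma_{1/4}$ arising from the simple zeros of $\zeta(2s)$ on $\Re(s)=\tfrac14$ (here the strong Riemann hypothesis guarantees these zeros lie on $\Re(s)=\tfrac14$ and are simple, so that $\zeta(2s)$ vanishes to first order and $F(s)$ genuinely has a simple pole there).

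First I would handle the triple pole at $s=1$. The idea is to write $g(s)\frac{x^s}{s}$ where $g(s)=(s-1)^3F(s)=\left((s-1)\zeta(s)\right)^3/\zeta(2s)$ is holomorphic and nonzero near $s=1$. The residue of a function with a pole of order $3$ is the coefficient of $(s-1)^{-1}$, equivalently $\tfrac{1}{2!}\lim_{s\to1}\frac{\ud^2}{\ud s^2}\!\left[(s-1)^3 F(s)\frac{x^s}{s}\right]$. Expanding $x^s=x\cdot x^{s-1}=x\,e^{(s-1)\log x}=x\bigl(1+(s-1)\log x+\tfrac12(s-1)^2(\log x)^2+\cdots\bigr)$ produces the powers $(\log x)^2$, $\log x$, and $1$ after collecting terms, and matching these against the Taylor coefficients of $g(s)/s$ at $s=1$ yields $\mathcal{A}_1,\mathcal{A}_2,\mathcal{A}_3$. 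Since $g(s)/s$ is real-analytic with real Taylor coefficients at $s=1$ (because $\zeta$ and $\zeta(2\,\cdot)$ take real values on the real axis), the constants $\mathcal{A}_1,\mathcal{A}_2,\mathcal{A}_3$ are automatically real. The leading constant will be $\mathcal{A}_1=\tfrac12\,g(1)=\tfrac{1}{2\zeta(2)}$ using $\lim_{s\to1}(s-1)\zeta(s)=1$.

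Next, the pole at $s=0$ is simple and comes only from $\frac1s$, since $F(s)=\zeta^3(s)/\zeta(2s)$ is holomorphic at $s=0$ (note $\zeta(0)=-\tfrac12\neq0$ and $\zeta(2s)$ is holomorphic and nonvanishing at $s=0$). Hence the residue is simply $\lim_{s\to0} s\cdot F(s)\frac{x^s}{s}=F(0)\cdot x^0=\zeta^3(0)/\zeta(0)=\zeta^2(0)$, which matches the stated value. Finally, for each simple pole $\rho=\tfrac14+i\gamma_{1/4}$, the standard formula for the residue at a simple pole gives $\mathop{\mathrm{Res}}_{s=\rho}F(s)\frac{x^s}{s}=\dfrac{\zeta^3(\rho)}{\rho}\cdot\dfrac{x^\rho}{2\zeta'(2\rho)}$, where the factor $2$ and the derivative $\zeta'(2\rho)$ arise from differentiating $\zeta(2s)$ via the chain rule; simplicity of the zero ensures $\zeta'(2\rho)\neq0$, so this is well-defined, and setting $\mathcal{A}_{\gamma_{1/4}}=\dfrac{\zeta^3(\rho)}{2\rho\,\zeta'(2\rho)}$ gives the claimed form $\mathcal{A}_{\gamma_{1/4}}x^{\frac14+i\gamma_{1/4}}$.

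The main obstacle is purely the bookkeeping in the triple-pole computation: one must carefully expand the product $g(s)/s$ to second order in $(s-1)$ and convolve it with the expansion of $x^{s-1}$, keeping track of which combinations contribute to each power of $\log x$. This is routine Laurent-series algebra with no conceptual difficulty, and I would present the final constants $\mathcal{A}_1,\mathcal{A}_2,\mathcal{A}_3$ in terms of $\zeta(2),\zeta'(2),\zeta''(2)$ and $\gamma$ (via the Laurent expansion of $\zeta$ at $s=1$) rather than grinding through every intermediate term.
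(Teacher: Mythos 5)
Your proposal is correct and follows essentially the same route as the paper: a direct Laurent-expansion computation of the triple-pole residue at $s=1$, the observation that $F$ is analytic at $s=0$ so the residue there is $F(0)=\zeta^3(0)/\zeta(0)=\zeta^2(0)$, and the simple-pole formula $\mathcal{A}_{\gamma_{1/4}}=\zeta^3(\rho)/\bigl(2\rho\,\zeta'(2\rho)\bigr)$ at each zero $\rho=\tfrac14+i\gamma_{1/4}$ of $\zeta(2s)$ (the paper leaves this last residue as $\mathop{\mathrm{Res}}_{s=\rho}1/\zeta(2s)$ without evaluating it; your chain-rule evaluation is the same thing made explicit). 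One point in your favour: your intention to express $\mathcal{A}_2,\mathcal{A}_3$ in terms of $\zeta(2),\zeta'(2),\zeta''(2)$ and the Stieltjes constants is actually more accurate than the paper's displayed computation, which extracts the coefficient of $(s-1)^{-1}$ and multiplies by $1/\zeta(2)$ as though $1/\zeta(2s)$ were constant near $s=1$; this drops the contributions of the first and second Taylor coefficients of $1/\zeta(2s)$ against the $(s-1)^{-2}$ and $(s-1)^{-3}$ terms, which involve $\zeta'(2)$ and $\zeta''(2)$. That slip does not affect the truth of the lemma as stated (the constants are left unspecified, and $\mathcal{A}_1=\tfrac{1}{2\zeta(2)}$ is unaffected), but your more careful bookkeeping would give the correct explicit values of $\mathcal{A}_2$ and $\mathcal{A}_3$ where the paper's do not.
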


\begin{proof}
    It is well-known that $\zeta(2)=\frac{\pi^2}{6}\neq0$ and the Laurent series expansion of $\zeta(s)$ about $s=1$ is given by
   \begin{equation*}
        \zeta(s)=\frac{1}{s-1}+\sum_{m=0}^\infty\frac{(-1)^m}{m!}\gamma_m(s-1)^m,
    \end{equation*}
    where $\gamma_0,\gamma_1, \gamma_2,\cdots$ are defined as
    \begin{equation*}
       \gamma_m=\lim_{n\rightarrow \infty} \left\{\left(\sum_{k=1}^n\frac{(\log k)^m}{k}\right)-\frac{(\log n)^{m+1}}{m+1}\right\}.
    \end{equation*}
     When $m=0$, $\gamma_0=\gamma$ is the usual Euler-Mascheroni constant known as the Euler constant (see \cite{Ivic2012}).

     Thus, we can write 
     \begin{align*}
          &\mathop{\mathrm{Res}}_{s=1} F(s)\frac{x^s}{s}\\
          &=\mathop{\mathrm{Res}}_{s=1}\frac{\zeta^2(s)}{\zeta(2s)}\frac{x^s}{s}\\
          &=\mathop{\mathrm{Res}}_{s=1}\frac{1}{(1+(s-1))\zeta(2s)}\left(\frac{1}{s-1}+\sum_{m=0}^\infty\frac{(-1)^m}{m!}\gamma_m(s-1)^m\right)^3(xe^{(s-1)\log x})\\
         &=\mathop{\mathrm{Res}}_{s=1}\frac{x}{\zeta(2s)}\left(\frac{1}{(s-1)^3}+\frac{3\gamma}{(s-1)^2}+\frac{(3\gamma^2-3\gamma_1)}{s-1}+(\text{higher degree terms})\right)\times\\
         &\ \ \left(1+(s-1)\log x+\frac{((s-1)\log x)^2 }{2!}+\cdots\right)\left(1-(s-1)+(s-1)^2-\cdots\right)\\
         &=\mathop{\mathrm{Res}}_{s=1}\frac{1}{\zeta(2s)}\Bigg\{\frac{x}{(s-1)^3}+\frac{x(\log x+3\gamma-1)}{(s-1)^2}\\
         &+\frac{x((\log x)^2+(6\gamma-2)\log x+6\gamma^2-6\gamma-6\gamma_1+2)}{2(s-1)}+\text{higher degree terms}\Bigg\}\\
         &=\frac{1}{2\zeta(2)}x((\log x)^2+(6\gamma-2)\log x+6\gamma^2-6\gamma-6\gamma_1+2)\\
         &:=\mathcal{A}_1x(\log x)^2+\mathcal{A}_2 x\log x+\mathcal{A}_3x,
     \end{align*}
     for some real constants $\mathcal{A}_1$, $\mathcal{A}_2$ and $\mathcal{A}_3$.
     
     Now, for any simple zero $\frac{1}{4}+i\gamma_{\frac{1}{4}}$ of $\zeta(2s)$ on the line $\Re(s)=\frac{1}{4}$, we see that $\zeta^3(s)$ is analytic at $s=\frac{1}{4}+i\gamma_\frac{1}{4}$ and hence $F(s)\frac{x^s}{s}=\frac{\zeta^3(s)}{\zeta(2s)}\frac{x^s}{s}$ has a simple pole at $\frac{1}{4}+i\gamma_\frac{1}{4}$. Therefore,
\begin{align*}
    \mathop{\mathrm{Res}}_{s=\frac{1}{4}+i\gamma_\frac{1}{4}} F(s)\frac{x^s}{s}&=\zeta^3\left(\frac{1}{4}+i\gamma_\frac{1}{4}\right)\frac{x^{\frac{1}{4}+i\gamma_\frac{1}{4}}}{\frac{1}{4}+i\gamma_\frac{1}{4}}\mathop{\mathrm{Res}}_{s=\frac{1}{4}+i\gamma_\frac{1}{4}}\ \frac{1}{\zeta(2s)}\\
    &:=\mathcal{A}_{\gamma_\frac{1}{4}}x^{\frac{1}{4}+i\gamma_\frac{1}{4}}, 
\end{align*}
where $\mathcal{A}_{\gamma_\frac{1}{4}}$ is some complex constant which can be evaluated explicitly.

Note that $\zeta(0)=-\frac{1}{2}\neq0$ and therefore $F(s)=\frac{\zeta^3(s)}{\zeta(2s)}$ is analytic at $s=0$. Hence,
\begin{equation*}
    \mathop{\mathrm{Res}}_{s=0} F(s)\frac{x^s}{s}=\lim_{s\rightarrow0}(s-0)\frac{\zeta^3(s)}{\zeta(2s)}\frac{x^s}{s}=\zeta^2(0).
\end{equation*}
\end{proof}

\begin{lemma}[\cite{Titchmarsh}]
The Riemann hypothesis implies that 
    \begin{equation*}
    \zeta(\sigma+it)=O(( |t|+10)^\varepsilon),
\end{equation*}
for $\frac{1}{2}\leq\sigma\leq 2$ and $|t|\geq 10$; and 
 \begin{equation*}
    \frac{1}{\zeta(\sigma+it)}=O( ( |t|+10)^\varepsilon)
\end{equation*}
for $\frac{1}{2}<\sigma\leq 2$ and $|t|\geq 10$.\label{L3}
\end{lemma}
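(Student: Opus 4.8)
The plan is to deduce both estimates from a single upper bound on $\log\zeta(s)$, namely the strong Lindelöf-type consequence of the Riemann hypothesis that
\[
\bigl|\log\zeta(\sigma+it)\bigr| \ll \frac{\log t}{\log\log t} \qquad \Bigl(\tfrac12\le\sigma\le 2,\ t\ge 10\Bigr),
\]
which is due to Littlewood (see Titchmarsh, Chapter XIV). Since $\log|\zeta| = \mathrm{Re}\,\log\zeta$, a bound of size $C\log t/\log\log t$ on $|\log\zeta|$ gives at once
\[
|\zeta(\sigma+it)| = e^{\log|\zeta|} \le \exp\!\Bigl(\tfrac{C\log t}{\log\log t}\Bigr) = O\bigl((|t|+10)^\varepsilon\bigr)
\]
and $|1/\zeta(\sigma+it)| = e^{-\log|\zeta|} = O((|t|+10)^\varepsilon)$ for every fixed $\varepsilon>0$, because $\exp(C\log t/\log\log t)\le t^\varepsilon$ once $t$ is large; by the reflection $\zeta(\overline s)=\overline{\zeta(s)}$ it suffices to treat $t\ge 10$. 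The difference between the two parts of the lemma is precisely the $\sigma$-range: the lower bound on $\log|\zeta|$ needed for $1/\zeta$ requires $\zeta$ to be nonvanishing near $s$, which under the hypothesis holds only for $\sigma>\tfrac12$, whereas on $\sigma=\tfrac12$ the zeros of $\zeta$ make $1/\zeta$ unbounded. This is exactly why the second estimate carries the strict inequality $\sigma>\tfrac12$.

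The steps toward the $\log\zeta$ bound are as follows. First I would record the Hadamard partial-fraction expansion of $\zeta'/\zeta$ and, using the zero-density estimate $N(T+1)-N(T)\ll\log T$, localize it in the usual way to
\[
\frac{\zeta'}{\zeta}(s) = \sum_{|t-\gamma|\le 1}\frac{1}{s-\rho} + O(\log t) \qquad (-1\le\sigma\le 2).
\]
Under the Riemann hypothesis every zero is $\rho=\tfrac12+i\gamma$, so for $\sigma>\tfrac12$
\[
\mathrm{Re}\,\frac{1}{s-\rho} = \frac{\sigma-\tfrac12}{(\sigma-\tfrac12)^2+(t-\gamma)^2} \ge 0,
\]
the positivity of every term being the decisive input. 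Integrating $\zeta'/\zeta$ in $\sigma$ from the line $\sigma=2$, on which $\log\zeta=O(1)$, and taking real parts yields
\[
\log|\zeta(\sigma+it)| = O(\log t) - \sum_{|t-\gamma|\le 1}\tfrac12\log\frac{(3/2)^2+(t-\gamma)^2}{(\sigma-\tfrac12)^2+(t-\gamma)^2},
\]
with every summand nonnegative. This already gives the one-sided estimate $\log|\zeta|\le C\log t$, hence the polynomial bound $\zeta\ll t^C$; the matching lower bound needed for $1/\zeta$ follows by estimating the subtracted sum from above through the zero count, again using $\sigma>\tfrac12$ to keep $|s-\rho|\ge\sigma-\tfrac12>0$.

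The hard part, and the reason this is a genuine theorem rather than a routine convexity bound, is upgrading the polynomial bound $t^C$ to the subpolynomial $t^\varepsilon$, i.e.\ replacing $C\log t$ by $C\log t/\log\log t$. The crude estimate above is lossy because it permits up to $\gg\log t$ zeros to lie within distance $\sigma-\tfrac12$ of the height $t$; to recover the saving one works at the scale $\sigma=\tfrac12+O(1/\log\log t)$ and invokes Littlewood's lemma — a three-circles / Borel--Carath\'eodory argument for $\log\zeta$ on discs of radius tuned to $1/\log\log t$, combined with the Jensen-type control of $\sum_\rho\log|s-\rho|$ that the positivity from the hypothesis supplies. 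This produces the sharp bound $\log\zeta(\sigma+it)\ll(\log t)^{2-2\sigma}/\log\log t$, which for $\sigma\ge\tfrac12$ is $\ll\log t/\log\log t$ since $2-2\sigma\le 1$. Extending this to the right is easy: the bound only improves as $\sigma$ grows, and for $\sigma\ge 1+\delta$ the Dirichlet series gives $\log\zeta=O(1)$ directly (the pole at $s=1$ causing no trouble because $|t|\ge 10$), so the full range $\tfrac12\le\sigma\le 2$ is covered. Exponentiating as in the first paragraph then completes both estimates. I expect the delicate book-keeping in Littlewood's lemma — the choice of radius $\asymp 1/\log\log t$ and the count of zeros in the relevant discs — to be the only non-routine point, everything else being standard and, indeed, the precise content of Titchmarsh, Chapter XIV, to which the lemma refers.
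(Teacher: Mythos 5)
The paper supplies no proof of this lemma at all---it is quoted verbatim from the cited reference---and your sketch is essentially the argument of Titchmarsh, Chapter~XIV, that the citation points to: the localized Hadamard expansion of $\zeta'/\zeta$, the positivity of $\mathrm{Re}\,(s-\rho)^{-1}$ for $\sigma>\frac12$ under RH, and Littlewood's refinement at the scale $\sigma-\frac12\asymp 1/\log\log t$ yielding $\log\zeta\ll\log t/\log\log t$, which exponentiates to the two $t^{\varepsilon}$ bounds. One caution: your opening display claims the two-sided bound $|\log\zeta(\sigma+it)|\ll\log t/\log\log t$ on the closed range $\frac12\le\sigma\le2$, which is false at and arbitrarily near the critical line (where zeros make $\log\zeta$ unbounded below); you repair this yourself later, and accordingly---as in Titchmarsh's Theorem~14.2---the $1/\zeta$ estimate should be read as uniform only in $\sigma\ge\frac12+\delta$ for each fixed $\delta>0$, not uniformly over the open constraint $\sigma>\frac12$.
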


\section{Proof of Theorem \ref{T1}}
\begin{proof}
    
We assume the strong Riemann hypothesis for $\zeta(s)$ and $\zeta(2s)$ throughout the proof. 
By applying Perron's formula to $F(s)$, by Lemma \ref{L1}, we get
\begin{equation*}
    \sum_{n\leq x}d(n^2)=\frac{1}{2\pi i}\int_{1+\varepsilon-iT}^{1+\varepsilon+iT} F(s)\frac{x^s}{s}ds+O\left(\frac{x^{1+\varepsilon}}{T}\right),
\end{equation*}
where $10\leq T\leq x$ is a parameter to be chosen later. 

Now, we move the line of integration to $\Re(s)=-1+\varepsilon$. Then in the rectangle $\mathcal{R}$ formed by the vertices $1+\varepsilon+iT,\ -1+\varepsilon+iT,\ -1+\varepsilon-iT,\ 1+\varepsilon-iT$ joining with straight line segments, $F(s)\frac{x^s}{s}$ possesses a pole of order 3 at the point $s=1$, a simple pole at the point $s=0$ and simple poles at each zero $\frac{1}{4}+i\gamma_\frac{1}{4}$ of $\zeta(2s)$ on the line $\Re(s)=\frac{1}{4}$. Thus, Cauchy's residue theorem implies

\begin{align*}
    \sum_{n\leq x}d(n^2)&=\mathcal{A}_1x(\log x)^2+\mathcal{A}_2 x\log x+\mathcal{A}_3x+\zeta^2(0)+\sum_{\substack{\rho=\frac{1}{4}+i\gamma_\frac{1}{4}\\\zeta(2\rho)=0,\ |\gamma_\frac{1}{4}|<T}}\mathcal{A}_{\gamma_\frac{1}{4}}x^{\frac{1}{4}+i\gamma_\frac{1}{4}}\\
    &+\frac{1}{2\pi i}\left\{\int_{-1+\varepsilon-iT}^{-1+\varepsilon+iT}+\int_{-1+\varepsilon+iT}^{1+\varepsilon+iT}+\int_{-1+\varepsilon-iT}^{1+\varepsilon-iT}\right\} F(s)\frac{x^s}{s}ds+O\left(\frac{x^{1+\varepsilon}}{T}\right)
\end{align*}

Now, we evaluate the contributions of the three integrals separately. Firstly, the left vertical line integral contribution is given by 
\begin{align*}
    I_1&:=\frac{1}{2\pi i}\int_{-1+\varepsilon-iT}^{-1+\varepsilon+iT}F(s)\frac{x^s}{s}ds\\
    &\ll \int_{-1+\varepsilon-iT}^{-1+\varepsilon+iT}\left|\frac{\zeta^3(s)}{\zeta(2s)}\frac{x^s}{s}ds\right|\\
    &\ll x^{-1+\varepsilon}+x^{-1+\varepsilon}\int_{10}^T\left|\frac{\zeta^3(-1+\varepsilon+it)}{\zeta(-2+2\varepsilon+2it)}\right| t^{-1}dt\\
    &\ll x^{-1+\varepsilon}+x^{-1+\varepsilon}\int_{10}^T\left|\frac{\chi^3(-1+\varepsilon+it)\zeta^3(1-(-1+\varepsilon+it))}{\chi(-2+2\varepsilon+2it)\zeta(1-(-2+2\varepsilon+2it))}\right| t^{-1}dt\\
    &\ll x^{-1+\varepsilon}+x^{-1+\varepsilon}\int_{10}^T t^{3(\frac{1}{2}-(-1))-(\frac{1}{2}-(-2))-1+4\varepsilon} dt\\
    &\ll x^{-1+\varepsilon} T^{2+4\varepsilon},
\end{align*}
which follows from \eqref{E1.1}, \eqref{E1.2} and Lemma \ref{L3}.

Now, the horizontal line integrals contributions are given by
\begin{align*}
    I_2+I_3&:=\frac{1}{2\pi i}\left\{\int_{-1+\varepsilon+iT}^{1+\varepsilon+iT}+\int_{-1+\varepsilon-iT}^{1+\varepsilon-iT}\right\} F(s)\frac{x^s}{s}ds\\
    &\ll \int_{-1+\varepsilon}^{1+\varepsilon}\left|\frac{\zeta^3(\sigma+iT)}{\zeta(2\sigma+2iT)}\right|x^\sigma T^{-1}d\sigma\\
    &\ll \left\{\int_{-1+\varepsilon}^{\frac{1}{4}}+\int_{\frac{1}{4}}^{\frac{1}{2}}+\int_{\frac{1}{2}}^{1+\varepsilon}\right\}\left|\frac{\zeta^3(\sigma+iT)}{\zeta(2\sigma+2iT)}\right|x^\sigma T^{-1}d\sigma\\
    &\ll \int_{-1+\varepsilon}^{\frac{1}{4}}T^{3(\frac{1}{2}-\sigma)-(\frac{1}{2}-2\sigma)-1}\left|\frac{\zeta^3(1-(\sigma+iT))}{\zeta(1-(2\sigma+2iT))}\right|x^\sigma d\sigma\\
    & +\int_{\frac{1}{4}}^{\frac{1}{2}}T^{3(\frac{1}{2}-\sigma)-1}\left|\frac{\zeta^3(1-(\sigma+iT))}{\zeta(2\sigma+2iT)}\right|x^\sigma d\sigma\\
    &+\int_{\frac{1}{2}}^{1+\varepsilon}\left|\frac{\zeta^3(\sigma+iT)}{\zeta(2\sigma+2iT)}\right|x^\sigma T^{-1}d\sigma\\
    &\ll \int_{-1+\varepsilon}^{\frac{1}{4}}x^\sigma T^{-\sigma+4\varepsilon} d\sigma+\int_{\frac{1}{4}}^{\frac{1}{2}} x^\sigma T^{\frac{1}{2}-3\sigma+4\varepsilon} d\sigma+\int_{\frac{1}{2}}^{1+\varepsilon} x^\sigma T^{-1+4\varepsilon} d\sigma .
\end{align*}
which follows from \eqref{E1.1}, \eqref{E1.2} and  Lemma \ref{L3}.

Note that $(\frac{x}{T})^\sigma$ increases monotonically as a function of $\sigma$ in $[-1+\varepsilon, \frac{1}{4}]$ since $10\leq T\leq x$. However, in the other interval $[\frac{1}{4}, \frac{1}{2}]$, $(\frac{x}{T^3})^\sigma$ is only monotonic as a function of $\sigma$. Thus, we have
\begin{align*}
    I_2+I_3 &\ll x^\frac{1}{4}T^{-\frac{1}{4}+4\varepsilon}+T^{\frac{1}{2}+4\varepsilon}\left\{\left(\frac{x}{T^3}\right)^\frac{1}{2}+\left(\frac{x}{T^3}\right)^\frac{1}{4}\right\}+x^{1+\varepsilon}T^{-1+4\varepsilon}\\
    &\ll x^\frac{1}{4}T^{-\frac{1}{4}+4\varepsilon}+x^{\frac{1}{2}}T^{-1+4\varepsilon}+x^{1+\varepsilon}T^{-1+4\varepsilon}\\
    &\ll x^{1+\varepsilon}T^{-1+4\varepsilon},
\end{align*}

Therefore, in total, we have 
\begin{align*}
    \sum_{n\leq x}d(n^2)&=\mathcal{A}_1x(\log x)^2+\mathcal{A}_2 x\log x+\mathcal{A}_3x+\zeta^2(0)\\
    &+\sum_{\substack{\rho=\frac{1}{4}+i\gamma_\frac{1}{4}\\\zeta(2\rho)=0,\ |\gamma_\frac{1}{4}|<T}}\mathcal{A}_{\gamma_\frac{1}{4}}x^{\frac{1}{4}+i\gamma_\frac{1}{4}}+O\left(x^{-1+\varepsilon} T^{2+4\varepsilon}\right)+O\left(x^{1+\varepsilon}T^{-1+4\varepsilon}\right).
\end{align*}

Finally, making an optimal choice of $T$ as $\frac{T^2}{x}\sim \frac{x}{T}$, i.e., $T\sim x^{\frac{2}{3}}$, we obtain 
\begin{align*}
    \sum_{n\leq x}d(n^2)=&\mathcal{A}_1x(\log x)^2+\mathcal{A}_2 x\log x+\mathcal{A}_3x\\
    &+\sum_{\substack{\rho=\frac{1}{4}+i\gamma_\frac{1}{4}\\\zeta(2\rho)=0,\ |\gamma_\frac{1}{4}|<x^{\frac{2}{3}}}}\mathcal{A}_{\gamma_\frac{1}{4}}x^{\frac{1}{4}+i\gamma_\frac{1}{4}}+O\left(x^{\frac{1}{3}+10\varepsilon}\right).
\end{align*}

This completes the proof of Theorem \ref{T1}.
\end{proof}

\noindent {\bf Acknowledgments.} The first author wishes to express his gratitude to the Funding Agency "Ministry of Education, Govt. of India" for the Prime Minister's Research Fellowship (PMRF), ID: 3701831, for its financial support.

Addresses: K. Venkatasubbareddy, School of Mathematics and Statistics, University of Hyderabad, Hyderabad, India-500046\\
A. Sankaranarayanan, School of Mathematics and Statistics, University of Hyderabad, Hyderabad, India-500046

\end{document}